\crefname{equation}{}{}
\newtheorem{theorem}{Theorem}[section]
\newtheorem{corollary}[theorem]{Corollary}
\newtheorem*{conjecture*}{Conjecture}
\theoremstyle{definition}
\theoremstyle{remark}
\newtheorem*{remark}{Remark}
\newtheorem*{example}{Example}
\numberwithin{equation}{section}
\newcommand{\N}{\mathbb N}
\newcommand{\fm}{\mathfrak m}
\newcommand{\fmodd}{\mathfrak{m}_{{\text {\rm odd}}}}
\newcommand{\SL}{\mathrm{SL}}
\newcommand{\Z}{\mathbb Z}
\title[Remarks on MacMahon $q$-series]{Remarks on MacMahon's $q$-series}
\date{\today}
\thanks{2020 {\it{Mathematics Subject Classification.}} {05A17; 11P81}}
\keywords{Partitions, overpartitions, MacMahon's $q$-series}
\author{Ken Ono \and Ajit Singh}
\address{Dept. of Mathematics, University of Virginia, Charlottesville, VA 22904}
\email{ko5wk@virginia.edu}
\email{ajit18@iitg.ac.in}
\begin{document}
\begin{abstract} In his important 1920 paper on partitions, MacMahon defined the partition generating functions
\begin{displaymath}
\begin{split}
A_k(q)=\sum_{n=1}^{\infty} \fm(k;n)q^n&:=\sum_{0< s_1<s_2<\cdots<s_k} \frac{q^{s_1+s_2+\cdots+s_k}}{(1-q^{s_1})^2(1-q^{s_2})^2\cdots(1-q^{s_k})^2},\\
C_k(q)=\sum_{n=1}^{\infty} \fmodd(k;n)q^n&:=\sum_{0< s_1<s_2<\cdots<s_k} \frac{q^{2s_1+2s_2+\cdots+2s_k-k}}{(1-q^{2s_1-1})^2(1-q^{2s_2-1})^2\cdots(1-q^{2s_k-1})^2}.
\end{split}
\end{displaymath} 
These series give infinitely many formulas for two prominent generating functions.
For each non-negative $k$, we prove that  $A_k(q), A_{k+1}(q), A_{k+2}(q),\dots$ (resp. $C_k(q), C_{k+1}(q), C_{k+2}(q),\dots$) give the generating function for the 3-colored partition function $p_3(n)$  (resp.  the overpartition function $\overline{p}(n)$). 
\end{abstract}

\maketitle
\section{Introduction and Statement of Results}

In an important paper on integer partitions, MacMahon \cite{MacMahon} introduced the family of $q$-series 
\begin{align} \label{Mac1} 
A_k(q):=\sum_{0< s_1<s_2<\cdots<s_k} \frac{q^{s_1+s_2+\cdots+s_k}}{(1-q^{s_1})^2(1-q^{s_2})^2\cdots(1-q^{s_k})^2}.
\end{align}
For positive integers $k,$ we have that
\begin{equation}\label{mk}
A_k(q)=\sum_{n=1}^{\infty}\fm(k;n)q^n=\sum_{\substack{0<s_1<s_2<\dots<s_k\\  (m_1,\dots,m_k)\in \N^k}} m_1m_2\dots m_k q^{m_1s_1+m_2s_2+\dots +m_ks_k},
\end{equation}
and so $A_k(q)$ is a natural partition generating function. Indeed, we have that $\fm(k;n)$ is the sum of the products of the {\it part multiplicities}  for partitions of $n$ with $k$ distinct part sizes. 

These series connect partitions to disparate areas of mathematics. In elementary number theory,
MacMahon realized some of the $A_k(q)$ as generating functions for divisor sums.
For example, he found that
\begin{displaymath}
\begin{split}
A_1(q)&=\sum_{n=1}^{\infty} \sigma_1(n)q^n=1+3q^2+4q^3+7q^4+6q^5+\dots,\\
A_2(q)&=\frac{1}{8}\sum_{n=1}^{\infty} \left( (-2n+1)\sigma_1(n)+\sigma_3(n) \right)=q^3+3q^4+9q^5+\dots,
\end{split}
\end{displaymath}
where $\sigma_{\nu}(n):=\sum_{d\mid n}d^{\nu}.$
Extending beyond number theory, these series arise in the study of  Hilbert schemes,  $q$-multiple zeta-values, representation theory, and  topological string theory (for example, see \cite{R1, R2, R3, R4}).
Recent research has focused on the quasimodularity of the $A_k(q)$ 
(for example, see \cite{AOS, AAT, Andrews-Rose, Bachmann, Bringmann, Rose}). Andrews and Rose
\cite{Andrews-Rose, Rose} proved that $A_k(q)$ is a linear combination of quasimodular forms on $\SL_2(\Z)$ with weights $\leq 2k.$

 In this note, we instead focus on the combinatorial properties of MacMahon's series. We show that they satisfy infinitely many systematic identities, that, in turn, illustrate the ubiquity of the $\fm(k;n)$ partition functions.
To set the stage, we offer some terms of $A_0(q), A_1(q),\dots, A_5(q):$
\begin{displaymath}
\begin{split}
A_0(q)&:=q^0,\\
A_1(q)&=q+3q^2+4q^3+7q^4+6q^5+\dots, \\
A_2(q)&=q^3+3q^4+9q^5+15q^6+30q^7+\dots, \\
A_3(q)&=q^6+3q^7+9q^8+22q^9+42q^{10}+\dots, \\
A_4(q)&=q^{10}+3q^{11}+9q^{12}+22q^{13}+51q^{14}+\dots, \\
A_5(q)&=q^{15} + 3q^{16} + 9q^{17} + 22q^{18} + 51q^{19}  + \dots
\end{split}
\end{displaymath}
As these examples suggest, the $A_k(q)$ behave well as $k\rightarrow +\infty.$ Indeed, in terms of the $q$-Pochhammer symbol
$$
(a;q)_{\infty}:=(1-a)(1-aq)(1-aq^2)\dots,
$$
and Jacobi's famous identity for $(q;q)_{\infty}^3,$ 
it is known  that (see Theorem 1.1 of \cite{AOS})
\begin{equation}
\label{Limit_k}
\frac{1}{(q;q)_{\infty}^3}=q^{-\frac{k^2+k}{2}} A_k(q)+ O(q^{k+1}).
\end{equation}
In a recent preprint, Bringmann, Craig, van Ittersum and Pandey \cite{Bringmann} obtain further such results relating infinite products with MacMahon-type $q$-series.

It is natural to ask whether (\ref{Limit_k}) is a glimpse of explicit identities, one for each non-negative integer $k$. We show that this is indeed the case, where $q^{-\frac{k^2+k}{2}}A_k(q)$ is simply the first summand of a closed formula involving $A_k(q), A_{k+1}(q), A_{k+2}(q),\dots.$ 

\begin{theorem}\label{Ak}
	If $k$ is a non-negative integer, then we have
	$$\frac{1}{(q;q)_{\infty}^3}=
	q^{-\frac{k^2+k}{2}}\sum_{m=k}^\infty \binom{2m+1}{m+k+1}A_{m}(q).
	$$
\end{theorem}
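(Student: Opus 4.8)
The plan is to study the two-variable generating function $\mathcal A(z;q):=\sum_{m\ge 0}A_m(q)z^m$. By \eqref{mk}, $A_m(q)$ is exactly the $m$-th elementary symmetric function in the quantities $f_s:=q^s/(1-q^s)^2$, so first I would record the product expansion
\[
\mathcal A(z;q)=\prod_{s=1}^{\infty}\left(1+\frac{z\,q^{s}}{(1-q^{s})^{2}}\right)=\prod_{s=1}^{\infty}\frac{1-(2-z)q^{s}+q^{2s}}{(1-q^{s})^{2}}.
\]
The numerator factors as $1-(2-z)q^{s}+q^{2s}=(1-wq^{s})(1-w^{-1}q^{s})$ under the substitution $z=2-w-w^{-1}$, which collapses the product into a ratio of Pochhammer symbols:
\[
\mathcal A(z;q)=\frac{(wq;q)_{\infty}(q/w;q)_{\infty}}{(q;q)_{\infty}^{2}},\qquad z=2-w-w^{-1}.
\]
Applying the Jacobi triple product in the form $(x;q)_\infty(q/x;q)_\infty(q;q)_\infty=\sum_{n}(-1)^n x^n q^{\binom{n}{2}}$ with $x=wq$ then rewrites the right-hand side as the theta quotient
\[
\mathcal A(z;q)=\frac{1}{(q;q)_{\infty}^{3}}\cdot\frac{1}{1-w^{-1}}\sum_{n=-\infty}^{\infty}(-1)^{n}w^{n}q^{\binom{n+1}{2}}.
\]

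The crux of the argument, and the step I expect to be the main obstacle, is handling the apparent pole at $w=1$. Pairing $n\leftrightarrow -1-n$ shows the theta sum vanishes at $w=1$, so the pole of $1/(1-w^{-1})$ cancels; carrying this out explicitly via $\frac{w^{n+1}-w^{-n}}{w-1}=\sum_{\ell=-n}^{n}w^{\ell}$ produces the manifestly symmetric Laurent expansion
\[
\mathcal A(z;q)=\frac{1}{(q;q)_{\infty}^{3}}\sum_{n\ge 0}(-1)^{n}q^{\binom{n+1}{2}}\sum_{\ell=-n}^{n}w^{\ell}.
\]
Consequently, for $\ell\ge 0$ the coefficient of $w^{\ell}$ in $\mathcal A(z;q)$ equals $(q;q)_{\infty}^{-3}\sum_{n\ge \ell}(-1)^{n}q^{\binom{n+1}{2}}$. (This coefficient extraction is legitimate term by term in $q$: since $A_m(q)=O(q^{\binom{m+1}{2}})$, each power of $q$ receives contributions from only finitely many $m$, so every $q$-coefficient is a genuine Laurent polynomial in $w$.)

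To finish, I would compute the same coefficient from the left. Writing $z^{m}=(2-w-w^{-1})^{m}=(-1)^{m}(w^{1/2}-w^{-1/2})^{2m}$ and expanding shows that the coefficient of $w^{\ell}$ in $z^m$ is $(-1)^{\ell}\binom{2m}{m-\ell}$, so the coefficient of $w^{\ell}$ in $\mathcal A(z;q)$ is also $(-1)^{\ell}\sum_{m}\binom{2m}{m-\ell}A_m(q)$. Comparing the two expressions for the coefficients of $w^{k}$ and of $w^{k+1}$ and subtracting, the theta-side tail sum telescopes to the single term $(-1)^{k}q^{\binom{k+1}{2}}/(q;q)_\infty^3$, while on the $A_m$ side Pascal's rule gives $\binom{2m}{m-k}+\binom{2m}{m-k-1}=\binom{2m+1}{m-k}=\binom{2m+1}{m+k+1}$. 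After cancelling $(-1)^k$ this yields $\sum_{m\ge k}\binom{2m+1}{m+k+1}A_m(q)=q^{\binom{k+1}{2}}/(q;q)_\infty^{3}$, which is the claimed identity since $\binom{k+1}{2}=(k^2+k)/2$ and the binomial coefficient vanishes for $m<k$. The only genuinely delicate point is the pole cancellation above; the remaining manipulations (the elementary-symmetric product, Pascal's rule, and the telescoping) are routine.
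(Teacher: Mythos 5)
Your proof is correct and is essentially the paper's proof in a different parametrization: your substitution $z=2-w-w^{-1}$ (with $w=-\zeta^{2}$) is exactly the paper's expansion of the product $\prod_{n\ge 1}\bigl(1+\tfrac{q^{n}}{(1-q^{n})^{2}}(\zeta+\zeta^{-1})^{2}\bigr)$ in powers of $(\zeta+\zeta^{-1})^{2}$, and both arguments come down to the Jacobi triple product followed by coefficient comparison combined via Pascal's rule. The only cosmetic difference is that the paper multiplies the $A$-side by $(1+\zeta^{-2})$ and reads off the coefficient of $\zeta^{2k}$, whereas you divide the theta side by $(1-w^{-1})$ (your pole-cancellation step) and then subtract the $\ell=k$ and $\ell=k+1$ coefficient identities --- the same computation performed in the opposite order.
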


To further appreciate these identities, we note, for positive $m$,  that (\ref{Limit_k}) implies
\begin{equation}\label{AkShift}
q^{-\frac{k^2+k}{2}}A_{k+m}(q)=q^{\frac{m(m+1)}{2}+mk}+\dots.
\end{equation}
The first terms of these $q$-series have exponents that grow quadratically $m.$   Therefore, we can use Theorem~\ref{Ak} to compute initial segments of 
\begin{align}\label{GF-3-color}
\frac{1}{(q;q)_{\infty}^3}=\sum_{n\geq 0} p_3(n)q^n,
\end{align}
the generating function for the 3-colored partition function $p_3(n),$  using a ``small number'' of summands.  In this way, we obtain a doubly infinite family of formulas relating the 3-colored partition function to MacMahon's $\fm(k;n)$ partition functions.

\begin{corollary}\label{Ak_Approximation}
If $k$ and $j$ are non-negative integers, then
	$$\frac{1}{(q;q)_{\infty}^3}=q^{-\frac{k^2+k}{2}}\sum_{m=0}^{j} \binom{2m+2k+1}{m+2k+1}A_{m+k}(q) + O\left(q^{\frac{(j+1)(j+2k+2)}{2}}\right).
	$$	
In particular, if $n<(j+1)(j+2k+2)/2,$ then we have
$$p_3(n)= \sum_{m=0}^j  \binom{2m+2k+1}{m+2k+1}\fm\left (m+k;n+\frac{k^2+k}{2}\right).
$$
\end{corollary}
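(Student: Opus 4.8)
The plan is to derive the Corollary directly from Theorem~\ref{Ak} by truncating the infinite sum and controlling the tail via the valuation estimate in~\eqref{AkShift}. First I would start from the exact identity
\[
\frac{1}{(q;q)_{\infty}^3}=q^{-\frac{k^2+k}{2}}\sum_{m=k}^{\infty}\binom{2m+1}{m+k+1}A_m(q),
\]
and reindex the summation by setting $m\mapsto m+k$, so that the sum runs over $m\geq 0$ and the binomial coefficient becomes $\binom{2(m+k)+1}{(m+k)+k+1}=\binom{2m+2k+1}{m+2k+1}$, matching the shape asserted in the Corollary. Splitting off the first $j+1$ terms leaves a tail $q^{-\frac{k^2+k}{2}}\sum_{m=j+1}^{\infty}\binom{2m+2k+1}{m+2k+1}A_{m+k}(q)$ that I must show is $O\!\left(q^{(j+1)(j+2k+2)/2}\right)$.

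For the tail estimate, the key input is the valuation of each summand. By~\eqref{AkShift}, we have $q^{-\frac{k^2+k}{2}}A_{k+m}(q)=q^{m(m+1)/2+mk}+\dots$, meaning its lowest-order term has exponent $\frac{m(m+1)}{2}+mk=\frac{m(m+2k+1)}{2}$. This exponent is strictly increasing in $m$ for $m\geq 0$, so the tail's smallest exponent is attained at $m=j+1$, giving $\frac{(j+1)(j+2k+2)}{2}$; every later term contributes an even higher power of $q$. Since the binomial coefficients are finite constants (independent of $q$), the entire tail is a $q$-series with no terms below $q^{(j+1)(j+2k+2)/2}$, which is exactly the claimed big-$O$ bound. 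I would note that~\eqref{AkShift} as stated follows from~\eqref{Limit_k}, which is quoted as Theorem~1.1 of~\cite{AOS}; the only point requiring care is confirming that the leading exponent is correct and that each $A_{k+m}(q)$ is an honest power series in $q$ with nonnegative integer coefficients, so that truncation genuinely loses only high-order terms.

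The second assertion then follows by extracting coefficients. Multiplying the approximation identity through by $q^{\frac{k^2+k}{2}}$ and comparing coefficients of $q^{n+\frac{k^2+k}{2}}$ on both sides, the left side contributes $p_3(n)$ (from~\eqref{GF-3-color}), while the error term $O\!\left(q^{(j+1)(j+2k+2)/2}\right)$ contributes nothing precisely when $n+\frac{k^2+k}{2}<\frac{(j+1)(j+2k+2)}{2}$; however, I expect the intended hypothesis $n<(j+1)(j+2k+2)/2$ to suffice, and I would verify this by checking that the stated bound already forces the coefficient of $q^{n+\frac{k^2+k}{2}}$ in the tail to vanish. On the right, the coefficient of $q^{n+\frac{k^2+k}{2}}$ in $A_{m+k}(q)$ is by definition $\fm\!\left(m+k;\,n+\frac{k^2+k}{2}\right)$, yielding the stated closed formula for $p_3(n)$.

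The main obstacle is the bookkeeping around the exact threshold: I need to confirm that the inequality governing when the $O$-term stops contributing matches the stated hypothesis $n<(j+1)(j+2k+2)/2$ exactly, accounting for the shift by $\frac{k^2+k}{2}$ in the exponent. This is a routine but error-prone comparison of quadratic expressions in $j$, $k$, and $n$, and everything else—the reindexing, the geometric growth of valuations, and the coefficient extraction—is mechanical once Theorem~\ref{Ak} and~\eqref{AkShift} are in hand.
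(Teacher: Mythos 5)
Your proposal is correct and follows exactly the paper's (very brief) argument: truncate the series in Theorem~\ref{Ak} after $j+1$ terms, bound the tail using the valuation from \eqref{AkShift}, and extract coefficients via \eqref{GF-3-color}. The threshold you worried about does come out to precisely $n<(j+1)(j+2k+2)/2$, since the error term $q^{-\frac{k^2+k}{2}}\sum_{m\ge j+1}\binom{2m+2k+1}{m+2k+1}A_{m+k}(q)$ already has valuation $\frac{(j+1)(j+2k+2)}{2}$ as a series in $q$ (no further shift is needed if you compare coefficients of $q^n$ directly rather than multiplying through by $q^{\frac{k^2+k}{2}}$).
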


\begin{remark}
Letting $j=1$ in Corollary~\ref{Ak_Approximation} gives Theorem~1.1 (ii) of \cite{AOS}.
\end{remark}

\begin{example} If $k=100$ and $j=2,$ then Corollary~\ref{Ak_Approximation} gives
\begin{displaymath}
\frac{1}{(q;q)_{\infty}^3}=q^{-5050}\cdot (A_{100}(q)+203A_{101}(q)+20910A_{102}(q)) + O(q^{306}).
\end{displaymath}
Therefore, for $n<306,$ we have
$$
p_3(n)=\fm(100; n+5050)+203\fm(101; n+5050)+20910\fm(102;n+5050).
$$
\end{example}

In addition to the $A_k(q),$ MacMahon also introduced \cite{MacMahon}  the $q$-series
\begin{align} \label{Mac2} 
C_k(q)=\sum_{n=1}^{\infty} \fmodd(k;n)q^n:=\sum_{0< s_1<s_2<\cdots<s_k} \frac{q^{2s_1+2s_2+\cdots+2s_k-k}}{(1-q^{2s_1-1})^2(1-q^{2s_2-1})^2\cdots(1-q^{2s_k-1})^2}.
\end{align}
The numbers $\fmodd(k;n)$ have the same partition theoretic description as the $\fm(k;n)$, where here the parts are required to be  odd. 
Furthermore,
in analogy with the work of Andrews and Rose \cite{Andrews-Rose, Rose},  Bachmann \cite{Bachmann} proved that each $C_k(q)$ is a finite linear combination of quasimodular forms on $\Gamma_0(2)$ with weight $\leq 2k.$

Here we show that the $C_k(q)$ also enjoy properties that are analogous to those of $A_k(q)$ described above. 
Namely, we prove the following theorem, where $C_0(q):=1.$

\begin{theorem}\label{Ck}
	The following are true.
		 
	 \noindent
	 (1) If $k$ is a non-negative integer, then we have
	 $$
	 q^{-k^2}C_k(q)=\frac{1}{(q^2;q^2)_{\infty}(q;q^2)_\infty^2}+O(q^{2k+1}).
	 $$

	\noindent
	(2) If $k$ is a non-negative integer, then we have
	   $$\frac{1}{(q^2;q^2)_{\infty}(q;q^2)_\infty^2}=
	   q^{-k^2}\sum_{m=k}^\infty \binom{2m}{m+k}C_{m}(q).
	   	   $$
\end{theorem}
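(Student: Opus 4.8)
The plan is to obtain both parts from a single closed form for the two-variable generating function $\sum_{k\ge 0}C_k(q)z^k$. First I would substitute $t_i=2s_i-1$ to rewrite
$$
C_k(q)=\sum_{\substack{0<t_1<\cdots<t_k\\ t_i\ \mathrm{odd}}}\ \prod_{i=1}^k\frac{q^{t_i}}{(1-q^{t_i})^2},
$$
which exhibits $C_k(q)$ as the $k$-th elementary symmetric function in the quantities $f_t:=q^t/(1-q^t)^2$ ($t$ odd), so that $\sum_{k\ge 0}C_k(q)z^k=\prod_{t\ \mathrm{odd}}(1+zf_t)$. The key algebraic step is the factorization $1+zf_t=(1-wq^t)(1-w^{-1}q^t)/(1-q^t)^2$, valid whenever $w+w^{-1}=2-z$ (since the numerator equals $1-(2-z)q^t+q^{2t}$). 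This turns the product into $(wq;q^2)_\infty(w^{-1}q;q^2)_\infty/(q;q^2)_\infty^2$, and applying the Jacobi triple product with $y=-w$ evaluates the numerator as $(q^2;q^2)_\infty^{-1}\sum_{n\in\Z}(-1)^nw^nq^{n^2}$. Writing $P(q):=1/\big((q^2;q^2)_\infty(q;q^2)_\infty^2\big)$ for the overpartition generating function, I arrive at the master identity
$$
\sum_{k\ge 0}C_k(q)z^k=P(q)\sum_{n=-\infty}^\infty(-1)^n w^n q^{n^2},\qquad w+w^{-1}=2-z,
$$
whence $C_k(q)=P(q)\,a_k$ with $a_k:=[z^k]\sum_{n}(-1)^nw^nq^{n^2}$ a pure $q$-series.

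For part (1), I would analyze the polynomials $c_n(z):=w^n+w^{-n}$, which obey $c_{n+1}=(2-z)c_n-c_{n-1}$; a one-line induction shows $\deg c_n=n$ with leading coefficient $(-1)^n$. Hence the smallest index contributing to $[z^k]$ is $n=k$, giving $a_k=q^{k^2}+\sum_{n>k}(-1)^n b_{n,k}\,q^{n^2}=q^{k^2}\big(1+O(q^{2k+1})\big)$ because $(k+1)^2-k^2=2k+1$. Multiplying by $P(q)=1+O(q)$ yields $q^{-k^2}C_k(q)=P(q)+O(q^{2k+1})$.

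For part (2), the cleanest route is the extraction identity $\binom{2m}{m+k}=[x^{m+k}](1+x)^{2m}$, which gives $\sum_m\binom{2m}{m+k}a_m=[x^k]\sum_m a_m\big((1+x)^2/x\big)^m$ (the sum truncates at $m=k$ since $\binom{2m}{m+k}=0$ for $m<k$). The decisive observation is that the substitution $z=(1+x)^2/x$ forces $2-z=-(x+x^{-1})$, so one may take $w=-x$; then $\sum_n(-1)^n w^n q^{n^2}$ collapses to $\sum_n x^n q^{n^2}$, whose coefficient of $x^k$ is exactly $q^{k^2}$. Therefore $\sum_{m\ge k}\binom{2m}{m+k}C_m(q)=P(q)\sum_m\binom{2m}{m+k}a_m=q^{k^2}P(q)$, which is the stated identity.

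The main obstacle I anticipate is making the substitution $z=(1+x)^2/x$ rigorous, since this is a Laurent series rather than a power series in $x$. The justification is $q$-adic: for each fixed power of $q$ only the finitely many $n$ with $n^2$ below that exponent contribute, and each $c_n\big((1+x)^2/x\big)$ is a genuine Laurent polynomial, so extracting $[x^k]$ coefficientwise is legitimate and commutes with the sum over $n$. Carrying out this bookkeeping, together with checking the boundary case $k=0$ (where $C_0(q)=1$ corresponds to the classical theta identity $\sum_n(-1)^nq^{n^2}=(q;q^2)_\infty^2(q^2;q^2)_\infty$, i.e.\ $P(q)a_0=1$), is where the care is required.
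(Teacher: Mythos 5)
Your proposal is correct and is essentially the paper's proof in a different parametrization: your master identity with $w+w^{-1}=2-z$ is exactly the paper's identity $\sum_{n}q^{n^2}z^{2n}/\big((q^2;q^2)_\infty(q;q^2)_\infty^2\big)=\sum_{n\ge 0}C_n(q)(z+z^{-1})^{2n}$ under the specialization $w=-z^2$ (so your $z$ is the paper's $(z+z^{-1})^2$), and your extraction of $[x^k]$ after setting $z=(1+x)^2/x$ is the paper's binomial-theorem comparison of the coefficients of $z^{2k}$. The only cosmetic difference is that you prove part (1) directly from the leading terms of the Chebyshev-like polynomials $c_n$, whereas the paper deduces (1) from (2) via the initial term $q^{m(m+2k)}$ of $q^{-k^2}C_{k+m}(q)$.
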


Theorems~\ref{Ak} and ~\ref{Ck} establish infinitely many formulas, one for each  integer $k$, between MacMahon's two families of $q$-series  and the reciprocals of the theta functions
\begin{displaymath}
\begin{split}
(q;q)_{\infty}^3&=\sum_{n=0}^{\infty}(-1)^n (2n+1)q^{\frac{n^2+n}{2}}=1-3q+5q^3-7q^6+9q^{10}-\dots,\\
(q^2;q^2)_{\infty}(q;q^2)_\infty^2&=\sum_{n\in \Z} (-1)^n q^{n^2}=1-2q+2q^4-2q^9+2q^{16}-2q^{25}+\dots.
\end{split}
\end{displaymath}
Corollary \ref{Ak_Approximation}, which relates the 3-colored partition function to MacMahon's $\fm(k;n)$ partition functions, relies on the fact that $1/(q;q)_{\infty}^3$ is the generating function of $p_3(n)$. Rather nicely, it turns out that
\begin{equation}\label{GenFcnOver}
\frac{1}{(q^2;q^2)_{\infty}(q;q^2)_\infty^2}=\sum_{n=0}^{\infty} \overline{p}(n)q^n=1+2q+4q^2+8q^3+14q^4+24q^5+\dots,
\end{equation}
where $\overline{p}(n)$ denotes the number of {\it overpartitions} of size $n.$ Recall that an overpartition of $n$ is an ordered sequence of nonincreasing positive integers, where the first occurrence of each integer may be overlined \cite{CorteelLovejoy}. Overpartitions have been the focus of intense research in recent years (for example, see \cite{ADSY, KLO, CorteelLovejoy, Hirschhorn-Sellers, Lovejoy-Osburn, Zhang}).
 Therefore, in analogy with Corollary~\ref{Ak_Approximation}, we obtain the following result.

\begin{corollary}\label{Ck_Approximation}
If $k$ and $j$ are non-negative integers, then
	$$\frac{1}{(q^2;q^2)_\infty(q;q^2)_\infty^2}=q^{-k^2}\sum_{m=0}^{j} \binom{2m+2k}{m+2k}C_{m+k}(q) + O\left(q^{(j+1)(j+2k+1)}\right).
	$$	
In particular, if $n<(j+1)(j+2k+1),$ then we have
$$\overline{p}(n)= \sum_{m=0}^j  \binom{2m+2k}{m+2k}\fmodd\left (m+k;n+k^2\right).
$$
\end{corollary}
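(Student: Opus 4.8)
The plan is to obtain \Cref{Ck_Approximation} directly from part (2) of \Cref{Ck}, which gives the exact series
\[
\frac{1}{(q^2;q^2)_{\infty}(q;q^2)_\infty^2}=q^{-k^2}\sum_{m=k}^\infty \binom{2m}{m+k}C_{m}(q).
\]
First I would reindex this sum by writing $m=\ell+k$, so that $\binom{2m}{m+k}=\binom{2\ell+2k}{\ell+2k}$ and the sum runs over $\ell\geq 0$. Splitting off the terms with $0\le \ell\le j$ leaves the finite sum in the statement together with a tail $q^{-k^2}\sum_{\ell>j}\binom{2\ell+2k}{\ell+2k}C_{\ell+k}(q)$, and everything then reduces to bounding the order of vanishing of this tail.

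Next I would record the valuation of $C_m(q)$. From the definition \eqref{Mac2}, the lowest power of $q$ arises from the minimal admissible tuple $s_i=i$, whose numerator contributes $q^{2(1+2+\cdots+m)-m}=q^{m^2}$ while each denominator factor is $1+O(q)$; hence $C_m(q)=q^{m^2}+O(q^{m^2+1})$. Consequently $q^{-k^2}C_{\ell+k}(q)$ has valuation $(\ell+k)^2-k^2=\ell(\ell+2k)$. Since $\ell\mapsto\ell(\ell+2k)$ is strictly increasing for $\ell\geq 0$, the minimal valuation among the tail terms occurs at $\ell=j+1$ and equals $(j+1)(j+2k+1)$. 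The binomial coefficients are constants, so the tail is $O\!\left(q^{(j+1)(j+2k+1)}\right)$, which establishes the first displayed identity.

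Finally, for the ``in particular'' statement I would extract the coefficient of $q^n$ on both sides. Using \eqref{GenFcnOver} the left side contributes $\overline{p}(n)$, while from $C_{m+k}(q)=\sum_{N\ge 1}\fmodd(m+k;N)q^N$ the factor $q^{-k^2}C_{m+k}(q)$ contributes $\fmodd(m+k;n+k^2)$ to the coefficient of $q^n$. For $n<(j+1)(j+2k+1)$ the error term does not contribute, so equating coefficients yields the stated formula for $\overline{p}(n)$.

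I do not expect a serious obstacle: once \Cref{Ck}(2) is in hand, the corollary is essentially a reindexing-and-truncation exercise. The one point requiring care is the valuation computation for $C_m(q)$ together with the verification that $\ell(\ell+2k)$ is minimized at $\ell=j+1$ over the tail, so that the error exponent is exactly $(j+1)(j+2k+1)$ rather than something weaker; the monotonicity of $\ell\mapsto\ell(\ell+2k)$ makes this immediate.
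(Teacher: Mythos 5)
Your proposal is correct and follows the same route as the paper: truncate the exact series from \cref{Ck}(2), use the valuation $q^{-k^2}C_{k+m}(q)=q^{m(m+2k)}+\dots$ (which the paper states by direct computation and you justify via the minimal tuple $s_i=i$ in \eqref{Mac2}) to bound the tail, and then compare coefficients using \eqref{GenFcnOver}. The only difference is that you spell out the reindexing and the monotonicity of $\ell\mapsto\ell(\ell+2k)$, which the paper leaves implicit.
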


\begin{example} If $k=100$ and $j=2,$ then Corollary~\ref{Ck_Approximation} gives
\begin{displaymath}
\frac{1}{(q^2;q^2)_\infty(q;q^2)_\infty^2}=q^{-10000}\cdot (C_{100}(q)+202C_{101}(q)+20706C_{102}(q)) + O(q^{609}).
\end{displaymath}
Therefore, for $n<{609},$ we have
$$
\overline{p}(n)=\fmodd(100;n+10000)+202\fmodd(101;n+10000)+20706\fmodd(102;n+10000).
$$
\end{example}

The proofs of our  results are rather straightforward, and follow from the Jacobi triple product identity. Namely,  we recognize the role of MacMahon's $q$-series as coefficients of power series in  $(z+z^{-1})^{2}$ obtained from this well-known bivariate infinite product. 

\begin{remark} The proofs of Theorems~\ref{Ak} and \ref{Ck} follow along similar lines. They differ in their choice of specialization (i.e. \hspace{-1.3em} changes of variable) of the Jacobi triple product identity. It would be interesting to see if other natural partition generating functions emerge from further specializations, to supplement these results on $p_3(n)$ and $\overline{p}(n).$ Finally, we point out that it would be interesting to carry out a similar analysis for the quintuple and septuple infinite product identities.
\end{remark}

\section*{Acknowledgements}
\noindent
The first author thanks the Thomas Jefferson Fund and the NSF
(DMS-2002265 and DMS-2055118). The second author thanks the support of a Fulbright Nehru Postdoctoral Fellowship. 

\section{Proofs}

\subsection{MacMahon's $A_k(q)$}
 Here we prove Theorem~\ref{Ak} and Corollary~\ref{Ak_Approximation}. 
 
 \begin{proof}[Proof of Theorem~\ref{Ak}]
 We recall the Jacobi triple product identity (see Theorem~2.8 of \cite{Andrews})
 \begin{equation}\label{JTP}
 \sum_{n=-\infty}^{\infty} q^{n^2}z^{n}=\prod_{n=0}^{\infty}(1-q^{2n+2})(1+z^{-1}q^{2n+1})(1+zq^{2n+1}).
 \end{equation}
 By factoring out $(q^2;q^2)_{\infty}$, and letting $z\rightarrow qz^2$, and then letting $q\rightarrow \sqrt{q}$, a simple reindex gives
 \begin{displaymath}
 \begin{split}
 \sum_{n=-\infty}^{\infty}q^{\frac{n(n+1)}{2}}z^{2n}=\left(1+z^{-2}\right)(q;q)_{\infty}\prod_{n=1}^{\infty}(1+z^{-2}q^{n})(1+z^2q^{n}).
 \end{split}
 \end{displaymath}
After straightforward algebraic manipulation, we find
 \begin{align*}
  \sum_{n=-\infty}^{\infty}q^{\frac{n(n+1)}{2}}z^{2n}
  =\left(1+z^{-2}\right)(q;q)_{\infty}\prod_{n=1}^{\infty}\left((1-q^{n})^2+(z+z^{-1})^2q^{n}\right).
  \end{align*}
  After factoring out $(q;q)_{\infty}^2$ from the infinite product,  we obtain
$$  
    \sum_{n=-\infty}^{\infty}q^{\frac{n(n+1)}{2}}z^{2n}=\left(1+z^{-2}\right)(q;q)_{\infty}^3\prod_{n=1}^{\infty}\left(1+\frac{q^{n}}{(1-q^{n})^2}\cdot (z+z^{-1})^2\right).
 $$
 Thanks to definition (\ref{Mac1}), we find that the infinite product on the right, as a power series in $(z+z^{-1})^2,$ is the generating function for  MacMahon's series.   
 Therefore, we find that\footnote{ The deduction above also appears in the work by Andrews-Rose and Rose \cite{Andrews-Rose, Rose}.}

  \begin{align*}
  \sum_{n=-\infty}^{\infty}\frac{q^{\frac{n(n+1)}{2}}}{(q;q)_{\infty}^3}\cdot z^{2n}=\left(1+z^{-2}\right)\sum_{n=0}^{\infty}A_n(q)(z+z^{-1})^{2n}.
  \end{align*}
   
  Thanks to the Binomial Theorem, followed by a simple shift in the index of summation, and culminating with a change in the order of summation, we obtain
  \begin{align*}
   \sum_{n=-\infty}^{\infty}\frac{q^{\frac{n(n+1)}{2}}}{(q;q)_{\infty}^3}\cdot z^{2n}&=\left(1+z^{-2}\right)\sum_{n=0}^{\infty}A_n(q)\sum_{j=0}^{2n}\binom{2n}{j}z^{2j-2n}
   =\left(1+z^{-2}\right)\sum_{n=0}^{\infty}A_n(q)\sum_{j=-n}^{n}\binom{2n}{j+n}z^{2j}\\
   &=\left(1+z^{-2}\right)\sum_{j=-\infty}^{\infty}\sum_{n=|j|}^{\infty}\binom{2n}{j+n}A_n(q)z^{2j}.
  \end{align*}
  After multiplying through $(1+z^{-2})$, we obtain
  $$
   \sum_{n=-\infty}^{\infty}\frac{q^{\frac{n(n+1)}{2}}}{(q;q)_{\infty}^3}\cdot z^{2n}=\sum_{j=-\infty}^{\infty}\left(\sum_{n=|j|}^{\infty}\binom{2n}{j+n}A_n(q)+\sum_{n=|j+1|}^{\infty}\binom{2n}{j+n+1}A_n(q)\right)z^{2j}.
  $$
  The theorem follows by comparing the coefficient of $z^{2k}$ on both sides after making use of the binomial coefficient identity
 $\binom{m}{r}+\binom{m}{r+1}=\binom{m+1}{r+1}.$ 
   \end{proof}

\begin{proof}[Proof of Corollary~\ref{Ak_Approximation}]
To prove the corollary, we truncate the infinite sums in Theorem \ref{Ak} after $j$ terms,  and we then apply
(\ref{AkShift}) and  \eqref{GF-3-color}.
\end{proof}

\subsection{MacMahon's $C_k(q)$}
Here we prove Theorem~\ref{Ck} and Corollary~\ref{Ck_Approximation}.

\begin{proof}[Proof of Theorem~\ref{Ck}]
We again use the Jacobi triple product identity (\ref{JTP}).
After factoring out $(q^2;q^2)_{\infty}$, and then letting $z\rightarrow z^2$, a simple reindex gives
$$
\sum_{n=-\infty}^{\infty}q^{n^{2}}z^{2n}=(q^2;q^2)_{\infty}\prod_{n=1}^{\infty}(1+z^{-2}q^{2n-1})(1+z^2q^{2n-1}).
$$
One easily checks that
$$
\sum_{n=-\infty}^{\infty}q^{n^{2}}z^{2n}=(q^2;q^2)_{\infty}\prod_{n=1}^{\infty}\left((1-q^{2n-1})^2+(z+z^{-1})^2q^{2n-1}\right).
$$
 After factoring out $(q;q^2)_{\infty}^2$ from the infinite product,  we obtain
$$
\sum_{n=-\infty}^{\infty}q^{n^{2}}z^{2n}=(q^2;q^2)_{\infty}(q;q^2)_{\infty}^2\prod_{n=1}^{\infty}\left(1+\frac{q^{2n-1}}{(1-q^{2n-1})^2}\cdot (z+z^{-1})^2\right).
$$
Thanks to definition (\ref{Mac2}), we find that the infinite product on the right, as a power series in $(z+z^{-1})^2,$ is the generating function for  MacMahon's series.   
Namely, we have
$$
\sum_{n=-\infty}^{\infty}\frac{q^{n^{2}}}{(q^2;q^2)_{\infty}(q;q^2)_{\infty}^2}\cdot z^{2n}=\sum_{n=0}^{\infty}C_n(q)(z+z^{-1})^{2n}.
$$
 Thanks to the Binomial Theorem, followed by a simple shift in the index of summation, and culminating with a change in the order of summation, we get
\begin{align*}
\sum_{n=-\infty}^{\infty}\frac{q^{n^{2}}}{(q^2;q^2)_{\infty}(q;q^2)_{\infty}^2}\cdot z^{2n}&=\sum_{n=0}^{\infty}C_n(q)\sum_{j=0}^{2n}\binom{2n}{j}z^{2j-2n}
=\sum_{n=0}^{\infty}C_n(q)\sum_{j=-n}^{n}\binom{2n}{j+n}z^{2j}\\
&=\sum_{j=-\infty}^{\infty}\sum_{n=|j|}^{\infty}\binom{2n}{j+n}C_n(q)z^{2j}
\end{align*}
By comparing the coefficient of $z^{2k}$ on both sides, one easily deduces claim (2), which in turn implies claim (1).
\end{proof}

\begin{proof}[Proof of Corollary~\ref{Ck_Approximation}]
By direct computation, for every positive integer $m$ we have
$$
q^{-k^2}C_{k+m}(q)=q^{m(m+2k)}+\dots.
$$
By truncating the infinite sums  in Theorem \ref{Ck} (2) after $j$ terms, the corollary now follows from this fact and (\ref{GenFcnOver}).

\end{proof}

\end{document}